\newcolumntype{C}{>{\Centering\arraybackslash}m{0.14\linewidth}}
\newcommand{\mbf}{\mathbf}
\newcommand{\veps}{\varepsilon}
\newcommand{\mbfu}{\mathbf{u}}
\newcommand{\mbfx}{\mathbf{x}}
\newcommand{\mbfy}{\mathbf{y}}
\newcommand{\mbfc}{\mathbf{c}}
\newcommand{\R}{\mathbb{R}}
\DeclareMathOperator{\supp}{supp}
\numberwithin{equation}{section}
\theoremstyle{plain}
\newtheorem{theorem}{Theorem}[section]
\newtheorem{lemma}[theorem]{Lemma}
\theoremstyle{definition}
\newtheorem*{defi*}{Definition} 
\theoremstyle{remark}
\newtheorem{remark}{Remark}
\newcommand{\problem}[1]
  {\begin{center}\noindent\fbox{\begin{minipage}{\columnwidth}#1\end{minipage}}
  \end{center}}
\theoremstyle{remark}
\let\alignts@preamble\align@preamble
\patchcmd{\alignts@preamble}{\displaystyle}{\textstyle}{}{}
\patchcmd{\alignts@preamble}{\displaystyle}{\textstyle}{}{}
\def\alignts{\let\align@preamble\alignts@preamble\start@align\@ne\st@rredfalse\m@ne}
\title[Nonexistence of solitary waves]{Nonexistence of multi-dimensional solitary waves for the Euler-Poisson system}
\author[J. Bae]{Junsik Bae}
\address[JB]{Department of Mathematical Sciences, Ulsan National Institute of Science and Technology, Ulsan, 44919, Korea}
\email{junsikbae@unist.ac.kr}
\author[D. Kawagoe]{Daisuke Kawagoe}
\address[DK]{Graduate School of Informatics, Kyoto University, Yoshida-honmachi, Sakyo-ku, Kyoto 606-8501 Japan}
\email{d.kawagoe@acs.i.kyoto-u.ac.jp}
\date{\today} 
\subjclass{Primary: 35Q35,  35Q51   Secondary: 35Q31, 35Q53}
\begin{document}
 \begin{abstract} 
We study the nonexistence of multi-dimensional solitary waves for the Euler-Poisson system governing ion dynamics. It is well-known that the one-dimensional Euler-Poisson system has solitary waves that travel faster than the ion-sound speed. In contrast, we show that the two-dimensional and three-dimensional models do not admit nontrivial irrotational spatially localized traveling waves for any traveling velocity and for general pressure laws.  We derive some Pohozaev type identities associated with the energy and density integrals. This approach is extended to prove the nonexistence of irrotational multi-dimensional solitary waves for the  two-species Euler-Poisson system for ions and electrons.

\end{abstract}
\maketitle 

\section{Introduction}
We consider the multi-dimensional Euler-Poisson system for ion dynamics:
\begin{equation}\label{EP}
\left\{
\begin{array}{l l}
\partial_{t} \rho + \nabla \cdot (\rho \mathbf{u}) = 0, &  \\ 
\rho\left(\partial_{t} \mathbf{u}  + (\mathbf{u} \cdot \nabla ) \mathbf{u} \right) +   \nabla  p(\rho) = - \rho\nabla  \phi, &  (\mathbf{x}\in\mathbb{R}^n, \; t > 0, \; n =2,3),\\
-\Delta \phi = \rho - e^\phi,  &\\
p(\rho)=K\rho^\gamma.  &
\end{array} 
\right.
\end{equation}
 In this model, $\rho >0$, $\mathbf{u}\in \mathbb{R}^n$ and $\phi\in \mathbb{R}$ represent the ion number density, the fluid velocity vector field for ions, and  the electric potential, respectively. Especially, the electron number density is represented by $e^\phi$ (the \textit{Boltzmann relation}). The function $p(\rho)$ denotes the pressure of the ions, where $K\geq 0$ and $\gamma \geq 1$ are constants. The pressureless ($K=0$) Euler-Poisson system  is an ideal model for cold ions.

The dynamics of ion waves in an electrostatic plasma is often described using the model \eqref{EP}. This model is derived from the two-species Euler-Poisson system for ions and electrons based on the physical fact that the electron mass is much smaller than that of ions. We refer to \cite{Ch,Dav,Pecseli} for more physical background and \cite{GGPS} for a mathematical treatment of the zero mass limit. 

One of the significant phenomena in the dynamics of electrostatic plasma is the formation of solitary waves. Previous mathematical studies indicate that solitary waves dominate the long time dynamics of the one-dimensional Euler-Poisson system in a certain regime. Specifically, the existence of solitary waves  has been studied through phase plane analysis \cite{Cor, Sag}, and the linear stability of the solitary waves has been investigated employing the Evans function approach \cite{BK, HS}. Additionally, the works of \cite{BK2,Guo} rigorously justify the Korteweg-de Vries (KdV) approximation of the Euler-Poisson system. Furthermore, in a numerical perspective, the works of \cite{HNS,Satt} study solitary wave interactions in the Euler-Poisson system. 

In this paper, we are interested in the multi-dimensional behavior of the Euler-Poisson system. More specifically, we investigate  whether the Euler-Poisson system \eqref{EP} has nontrivial multi-dimensional solitary waves.\footnote{Such solutions also called lump solutions, or localized (in all spatial directions) solitary waves to distinguish them from line (one-dimensional) solitary waves in multi-dimensional settings.} In light of the works \cite{BS,BS3,KP,KS,Mi,RT}, this question is  also closely related to the transverse instability mechanism of line solitary waves in the  multi-dimensional Euler-Poisson system. Perturbing unstable line solitary waves often leads to the formation of localized solitary waves (see the numerical study of \cite{KS} for instance). To the best of our knowledge, the (in)stability of line solitary waves under the multi-dimensional Euler-Poisson flow has not been studied yet.

From the Euler-Poisson system \eqref{EP}, the Kadomtsev–Petviashvili (KP) equation 
\begin{equation}\label{KP}
\partial_x(\partial_t u + \partial_{xxx}u + u\partial_xu) =
\begin{cases}
 - \partial_{yy}u & \text{for } n=2, \\
 -\partial_{yy}u - \partial_{zz}u  &  \text{for } n=3,
\end{cases}
\end{equation}
is formally derived (see Appendix).\footnote{Other physical situations from which the three-dimensional KP-II equation is  derived does not seem to be known.} More precisely, the equation \eqref{KP} is called the KP-II equation. We refer to \cite{Pu} for its rigorous derivation from the two-dimensional Euler-Poisson system in a long wavelength regime. It is well-known  that, unlike the KP-I equation (\eqref{KP} with $\partial_{xxx}$ replaced by $-\partial_{xxx}$), the KP-II equation does not possess nontrivial localized traveling waves \cite{BM,BS}. Furthermore, while the solution to \eqref{EP} does not need to be irrotational flow for the KP-II approximation, the leading order term is irrotational (see Appendix).

\begin{remark}
We obtain the KP-II equation since the electric force is repulsive.  Indeed, the KP-I equation is derived from \eqref{EP} with $-\Delta\phi$ replaced by $\Delta\phi$ (see Appendix). However, the authors are not aware that in which physical context such a  system can be considered. 
\end{remark}

On the other hand, in the three-dimensional case, the irrotational smooth solution to \eqref{EP} exists globally in time for initial data that are sufficiently smooth and small, and additionally the amplitude of solution decays to 0 as $t \to +\infty$ \cite{GP}. As a rule, dispersion becomes stronger in higher dimensions, and the irrotationality of the flow somewhat weakens the nonlinear transport. Based on the above facts, one may expect the nonexistence of irrotational multi-dimensional traveling solitary waves, at least in a particular regime. 

Due to Galilean invariance, looking for a traveling wave of \eqref{EP} satisfying $\mbfu(\bm{\xi}) \to \bm{0}$ as $|\bm{\xi}| \to +\infty$, where $\bm{\xi}:=\mbfx +\mbfc t$, is equivalent to finding a stationary solution of \eqref{EP} satisfying $\mbfu(\mbfx) \to \mbfc$ as $|\mbfx| \to \infty$. Hence, we consider the system 
\begin{equation}\label{EP_S_S}
\left\{
\begin{array}{l l}
 \nabla\cdot(\rho \mathbf{u}) = 0, & \\
 \rho\left( (\mathbf{u} \cdot \nabla ) \mathbf{u} \right) +   \nabla  p(\rho) = - \rho\nabla  \phi, &  (\rho>0, \; \mathbf{u},\mathbf{x} \in \mathbb{R}^n, \; n=2,3), \\
 -\Delta \phi = \rho - e^\phi, & \\
 p(\rho)=K\rho^\gamma, & (K \geq 0, \; \gamma \geq 1),
\end{array}
\right.
\end{equation} 
with the far-field condition
\begin{equation}\label{BdCondi_S}
(\rho-1,\mathbf{u}-\mathbf{c},\phi) \to \mathbf{0} \quad \text{as } |\mathbf{x}| \to \infty.
\end{equation} 

For a given constant vector $\mathbf{c} \in \mathbb{R}^n$, a nontrivial classical solution to \eqref{EP_S_S}--\eqref{BdCondi_S} is called a \textit{solitary wave}.  

We prove the nonexistence of irrotational solitary waves for \eqref{EP_S_S}--\eqref{BdCondi_S}. It is well-known that $|\nabla \times \mbfu| = 0$ is preserved as long as solutions to \eqref{EP} are smooth.  Hence, looking for irrotational solutions to \eqref{EP_S_S}--\eqref{BdCondi_S} is not an overdetermined problem. We will further discuss the assumption of irrotational flows after presenting the main result of this paper. 
\begin{theorem}\label{MainThm}
For $n=2$ and $n=3$, consider the system \eqref{EP_S_S}--\eqref{BdCondi_S} with any given constant vector $\mbfc \in \R^n$. Then, \eqref{EP_S_S}--\eqref{BdCondi_S} does not admit any nontrivial classical solution satisfying $\rho-1, \mbfu-\mbfc, \phi \in L^1(\R^n)$ and $\partial_{x_j} u_k = \partial_{x_k} u_j$, where  $j,k = 1, \ldots,n$. 
\end{theorem}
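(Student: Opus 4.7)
The plan is to derive two Pohozaev-type identities---an energy identity from the momentum equation in conservation form and a density identity from mass conservation tested against the velocity potential---combine them with the integrated Poisson equation, and then use convexity of $p(\rho)$ and $e^\phi$ to rule out any nontrivial solution.

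First, irrotationality together with the momentum equation gives Bernoulli's law $\tfrac{1}{2}|\mathbf{u}|^2 + h(\rho) + \phi = \tfrac{1}{2}|\mathbf{c}|^2 + h(1)$, where $h'(\rho) = p'(\rho)/\rho$ is the specific enthalpy, and the Poisson equation puts the momentum equation into conservation form via a Maxwell-type stress tensor,
\begin{equation*}
\partial_j\bigl(\rho u_i u_j - \partial_i\phi\,\partial_j\phi\bigr) + \partial_i\bigl(p + \tfrac{1}{2}|\nabla\phi|^2 + e^\phi\bigr) = 0.
\end{equation*}
Multiplying by $x_i$, summing in $i$, subtracting the far-field constants $c_ic_j + (p(1)+1)\delta_{ij}$, and integrating by parts (with a cutoff-and-limit argument to kill the boundary contributions using the $L^1$ hypothesis and classical regularity) will yield the energy identity
\begin{equation*}
\int\bigl[\rho|\mathbf{u}|^2 - |\mathbf{c}|^2 + n(p(\rho) - p(1)) + n(e^\phi - 1) + \tfrac{n-2}{2}|\nabla\phi|^2\bigr]\,d\mathbf{x} = 0. \quad (\mathrm{E})
\end{equation*}

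Next, testing $\nabla\cdot(\rho\mathbf{u})=0$ against the velocity potential $\psi$ defined by $\mathbf{u}-\mathbf{c} = \nabla\psi$, and combining with $\int(\rho\mathbf{u} - \mathbf{c})\,d\mathbf{x} = \mathbf{0}$ (obtained from testing mass against $x_i$), a short algebraic manipulation will yield the density identity
\begin{equation*}
\int \rho|\mathbf{u} - \mathbf{c}|^2\,d\mathbf{x} = |\mathbf{c}|^2\int(\rho - 1)\,d\mathbf{x}. \quad (\mathrm{D})
\end{equation*}
Integrating Poisson gives $\int(\rho - 1)\,d\mathbf{x} = \int(e^\phi - 1)\,d\mathbf{x}$. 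Substituting (D) into (E) causes the quadratic-in-$\mathbf{u}$ and $|\mathbf{c}|^2$-weighted terms to cancel exactly, leaving
\begin{equation*}
n\int\bigl[(p(\rho) - p(1)) + (\rho - 1)\bigr]\,d\mathbf{x} \;=\; -\tfrac{n-2}{2}\int|\nabla\phi|^2\,d\mathbf{x} \;\le\; 0 \quad (n\ge 2).
\end{equation*}

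The pointwise convexity $p(\rho) - p(1)\ge p'(1)(\rho - 1)$ then forces $\int(\rho - 1)\,d\mathbf{x}\le 0$, while (D) forces $|\mathbf{c}|^2\int(\rho - 1)\,d\mathbf{x}\ge 0$. If $\mathbf{c}\ne\mathbf{0}$, these together give $\int(\rho - 1)\,d\mathbf{x} = 0$; for $\gamma > 1$, strict convexity then yields $\rho\equiv 1$, while for $\gamma=1$ I will use $\mathbf{u}\equiv\mathbf{c}$ from (D), the constraint $\mathbf{c}\cdot\nabla\rho=0$ coming from mass, and $L^1$-integrability of $\rho - 1$ on $\mathbb{R}^n$ (so that a nontrivial $\rho$ constant along $\mathbf{c}$ cannot be integrable) to conclude $\rho\equiv 1$; Bernoulli then gives $\phi\equiv 0$. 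If $\mathbf{c}=\mathbf{0}$, (D) forces $\mathbf{u}\equiv\mathbf{0}$, Bernoulli expresses $\rho$ as a strictly decreasing smooth function $\rho(\phi)$, and the system reduces to a scalar semilinear equation $-\Delta\phi = \rho(\phi) - e^\phi$; its primitive $F(\phi) := -[(p(\rho(\phi)) - p(1)) + (e^\phi - 1)]$ is readily checked to be nonpositive with strict inequality for $\phi\ne 0$, so (E) becomes the classical Pohozaev relation $\tfrac{n-2}{2}\int|\nabla\phi|^2 = n\int F\,d\mathbf{x}$ and forces $\phi\equiv 0$.

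The main obstacle will be rigorously justifying the vanishing of boundary integrals in each integration by parts from the $L^1$ hypothesis alone; the plan is to upgrade to sufficient pointwise decay via elliptic regularity for $\phi$ together with the Bernoulli relation for $\rho$ and $\mathbf{u}$, and then run a cutoff argument carefully. A separate mass-integrability argument handles the degenerate case $\gamma=1$, where strict convexity of $p$ is lost.
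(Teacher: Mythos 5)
Your proposal is correct in substance and follows the same overall architecture as the paper (Pohozaev-type energy identity, a kinetic--mass identity, quasi-neutrality, and a sign contradiction on $\int(\rho-1)\,d\mbfx$), but two steps are implemented genuinely differently. First, you obtain the energy identity (E) by putting the momentum equation into divergence form with a Maxwell stress tensor and testing against $x_i$; the paper instead tests $\nabla\cdot(\rho\mbfu-\mbfc)=0$ against $(\mbfx\cdot\mbfu)\mu_r$ and substitutes the Bernoulli relation \eqref{EP_S3}. The two routes land on the same final identity $\int\big[(p(\rho)-p(1))+(\rho-1)\big]d\mbfx=-\tfrac{n-2}{2n}\int|\nabla\phi|^2\,d\mbfx$ (your convexity bound $p(\rho)-p(1)\ge K\gamma(\rho-1)$ is exactly the paper's $(1-\gamma)\tilde P\le 0$ in disguise), and your derivation has the minor advantage that (E) itself does not use irrotationality. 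Second, for the opposite sign you use $\int\rho|\mbfu-\mbfc|^2\,d\mbfx=|\mbfc|^2\int(\rho-1)\,d\mbfx\ge 0$, whereas the paper uses a second energy identity \eqref{En_2} whose electric term $\tilde Q(\phi)\ge 0$ vanishes only at $\phi=0$; this buys the paper a \emph{strict} inequality $\int(\rho-1)\,d\mbfx>0$ for nontrivial solutions and an immediate contradiction, while you only get $\ge 0$ and must dispose of the borderline case $\int(\rho-1)\,d\mbfx=0$ and the case $\mbfc=\mathbf{0}$ by hand (strict convexity, the Fubini argument along $\mbfc$, and the scalar Pohozaev identity). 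All of these auxiliary arguments check out, though note that your Fubini argument actually covers every $K\ge0$, $\gamma\ge1$ at once, so the $\gamma>1$/$\gamma=1$ split is unnecessary.

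Two points need care when you fill in details. (i) Your ``short algebraic manipulation'' for (D) requires not only $\int(\rho\mbfu-\mbfc)\,d\mbfx=\mathbf{0}$ but also $\int(\mbfu-\mbfc)\,d\mbfx=\mathbf{0}$; the latter does \emph{not} follow from mass conservation and is where irrotationality enters (test $\partial_{x_j}u_k-\partial_{x_k}u_j=0$ against $x_k\mu_r$, as in \eqref{Ident_2}). Without it you only obtain $\int\rho|\mbfu-\mbfc|^2\,d\mbfx=|\mbfc|^2\int(\rho-1)\,d\mbfx+\mbfc\cdot\int(\mbfu-\mbfc)\,d\mbfx$. (ii) You do not need to upgrade to pointwise decay via elliptic regularity: all boundary terms die using only $L^1\cap C_b$ membership of the relevant combinations, $\nabla\phi\in L^2(\R^n)$ (which follows from $\phi,\Delta\phi\in L^1\cap C_b\subset L^2$ by integration by parts), and a cutoff $\mu_r(\mbfx)=\mu_0(|\mbfx|^2/r^2)$ for which $|\mbfx||\nabla\mu_r|$ is uniformly bounded and supported in $r\le|\mbfx|\le\sqrt2\,r$; dominated convergence then kills every remainder. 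This is exactly the paper's Lemma \ref{lem:derivative_vanish}, and it is simpler and weaker in hypothesis than a decay estimate.
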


We note that our nonexistence result holds for arbitrary velocity $\mbfc$. In contrast, the one-dimensional Euler-Poisson system for ions admits solitary waves that travel faster than the ion-sound speed  \cite{Cor,Sag}. Also, our result does not assume any smallness (or largeness) of wave amplitudes. 

We give a formal illustration that, in terms of the variational point of view, the irrotationality assumption in Theorem \ref{MainThm} is highly natural.  A traveling solitary wave with speed $c$ is usually a critical point of the conserved (in time $t$) quantity $\mathcal{H}-c\mathcal{M}$, where $\mathcal{H}$ is the Hamiltonian and $\mathcal{M}$ is the momentum associated with the translation invariance of the equation. For the Euler-Poisson system \eqref{EP} with the far-field condition $(\rho,\mbfu,\phi) \to (1,0,0)$ as $|x| \to \infty$, we have $\mathcal{H}=\mathcal{H}_n$ and $\mathcal{M}=\mathcal{M}_n$, where 
\begin{equation*} 
\begin{array}{l l}
 \mathcal{H}_n(\rho,\mbfu,\phi)  := \int_{\R^n} \frac{1}{2}\rho|\mbfu|^2 + \tilde{P}(\rho)  + \rho\phi - e^\phi + 1 - \frac{|\nabla\phi|^2}{2}\,d\mbfx,  \\
\mathcal{M}_n(\rho,\mbfu) :=  \int_{\R^n} \rho\mbfu\,d\mbfx,
\end{array} 
\end{equation*}
where $\tilde{P}$ is defined in \eqref{Ptil}. However, the solitary wave $(\rho_c,u_c,\phi_c)$ for the one-dimensional Euler-Poisson system with speed $c$ is a critical point of the functional $\mathcal{H}_1 - c \mathcal{I}_1$, rather than $\mathcal{H}_1 - c \mathcal{M}_1$, where  
\[
\mathcal{I}_n(\rho,\mbfu)  := \int_{\R^n} (\rho-1)\mbfu\,d\mbfx
\]
is the momentum of ``excess mass''. This is due to the fact that $\rho_c \to 1 \neq 0$ as  $|\mbfx|\to \infty$.\footnote{A similar situation occurs in the other dispersive equation with a non-zero background. See \cite{BSa,BGS} for instance.}  On the other hand, for $n=2,3$, $\mathcal{I}_n$ is not conserved unless $|\mbfu \times (\nabla \times \mbfu)|=0$. Indeed, the following identity holds:
\begin{equation}\label{VecIden}
\frac{1}{2}\nabla|\mbfu|^2 = (\mbfu \cdot \nabla) \mbfu + \mbfu \times (\nabla \times \mbfu).
\end{equation}

Moreover, if $(\rho_\mbfc,\mbfu_\mbfc,\phi_\mbfc)$ is a critical point of $\mathcal{H}_n - \mbfc \cdot\mathcal{I}_n$, then we have
\[
\frac{\delta (\mathcal{H}_n - \mbfc \cdot\mathcal{I}_n)}{\delta \rho} (\rho_\mbfc,\mbfu_\mbfc,\phi_\mbfc)  = \frac{1}{2}|\mbfu_\mbfc|^2 + \tilde{P}'(\rho_\mbfc) + \phi_\mbfc - \mbfc \cdot \mbfu_\mbfc =0
\]
(i.e. Bernoulli’s Theorem), which holds when $|(\mbfu_\mbfc-\mbfc) \times (\nabla \times \mbfu_\mbfc)|=0$ due to \eqref{VecIden}.

We remark that the solutions considered in Theorem \ref{EP} are traveling solitary waves of \eqref{EP} with finite energy, that is, $\mathcal{H}_n < \infty$ (see the comment below Lemma \ref{Lem2}). It is obvious that the line solitary waves of \eqref{EP} are irrotational. However, for these waves, $\mathcal{H}_n=\infty$ since they are localized only in one direction.

We outline the proof of Theorem \ref{MainThm}. We prove the nonexistence of solitary waves by contradiction using some Pohozaev type identities. Since the flow is irrotational, $\phi$ is explicitly expressed in terms of other unknown functions (see \eqref{EP_S3}), and we derive 
\begin{subequations}\label{En}
\begin{align}
& 0  = \frac{1}{2}\int_{\R^n} \rho(|\mbfu|^2 - |\mbfc|^2) \,d\mbfx + \int_{\R^n} \tilde{P}(\rho)\,d\mbfx + \int_{\R^n} (\rho \phi - e^\phi + 1)  - \frac{n - 2}{2n}  |\nabla \phi|^2\,d\mbfx ,  \label{En_1}\\
& 0  = \frac{1}{2}\int_{\R^n} \rho(|\mbfu|^2-|\mbfc|^2) \,d\mbfx + \int_{\R^n} \tilde{P}(\rho)\,d\mbfx + \int_{\R^n} \tilde{Q}(\phi) \,d\mbfx, \label{En_2}
\end{align}
\end{subequations}
where 
\begin{subequations}\label{PQtil}
\begin{align}
& \tilde{P}(\rho) :=
\begin{cases}
K (\rho \ln \rho - \rho + 1), &\gamma = 1, \\
\frac{K \rho}{\gamma - 1} (\rho^{\gamma-1} - 1) - K(\rho - 1), &\gamma > 1, 
\end{cases} \label{Ptil} \\
& \tilde{Q}(\phi) := e^\phi \phi - e^\phi + 1 +  \frac{n + 2}{2n} |\nabla \phi|^2. \label{Qtil}
\end{align}
\end{subequations}
In \eqref{En}, the integrals correspond to the kinetic energy, the pressure potential energy, and the electric potential energy, respectively. The first form of the electric potential energy naturally arises from the Hamiltonian structure of \eqref{EP}, while it is clear that the potential energy $\tilde{Q}$ in the second form is non-negative. Using the non-negativity of $\tilde{P}$ and $\tilde{Q}$, we show that the kinetic energy part of \eqref{En_2} must be strictly negative if $(\rho,\mbfu,\phi)$ is nontrivial. We remark that $\tilde{P}(\rho)$ and $\tilde{Q}(\phi)$ have the same sign because the pressure and electric forces are both repulsive. 

On the other hand, we obtain an identity relating the kinetic energy integral to the mass integral (see \eqref{Ident_4}). This identity, together with the \textit{quasi-neutrality} (see \eqref{Ident_9}), allows us to deduce from \eqref{En_1} that the kinetic energy is non-negative, which is a contradiction. We justify the above argument in Section 2. 

The aforementioned argument applies to the nonexistence of the multi-dimensional solitary waves for the two-species Euler-Poisson system for ions and electrons, with a slight modification due to the different form of the Poisson equation. It is worth mentioning that in the small electron mass regime, traveling solitary waves for the one-dimensional two-species Euler-Poisson system exist   \cite{Cor}. We discuss the two-species model in Section 3.

We conclude this section with some open questions. Our proof strongly relies on the assumption that the flow is irrotational and that solitary waves subtracted by some constants belong to the $L^1$ space. It would be interesting to study (non)existence of solitary waves in a more general setting.

Additionally, like the other dispersive equations, Theorem \ref{MainThm} motivates the scattering problem (the long-term behavior resembles the linear dynamics) for irrotational smooth solutions to the multi-dimensional Euler-Poisson system in the $L^1$ space for small initial data. It seems unlikely to expect the scattering phenomenon in the one-dimensional case,  since solitary waves exist for arbitrarily small amplitude and decay exponentially. Moreover, they are linearly stable.

\section{Proof of Theorem \ref{MainThm}}

Let $\mbfc$ be a constant vector in $\R^n$, and we consider the following system:
\begin{subequations}\label{EP_S}
\begin{align}[left = \empheqlbrace\,] 
& \nabla \cdot (\rho \mbfu) = 0 , \label{EP_S1} \\
& \phi = \frac{|\mbfc|^2}{2} -  \frac{|\mbfu|^2}{2} - P(\rho), \label{EP_S3} \\
& \rho = e^\phi - \Delta \phi, \label{EP_S2} \\
& \partial_{x_j} u_k = \partial_{x_k} u_j, \qquad j,k = 1, \ldots,n, \label{EP_S4}\\
& (\rho, \mbfu, \phi) \to (1, \mbfc, 0) \text{ as } |\mbfx| \to +\infty,  \qquad (\rho>0, \; \mathbf{u},\mathbf{x} \in \mathbb{R}^n, \; n=2,3),  \label{EP_S5}
\end{align}
\end{subequations}
where $x_i$ and $u_i$ denote the $i$-th component of $\mbfx$ and $\mbfu$, respectively, and 
\begin{equation}\label{P}
P(\rho) := 
\begin{cases}
K \ln \rho, &\gamma = 1, \\
\frac{K \gamma}{\gamma - 1} (\rho^{\gamma-1} - 1), &\gamma > 1, 
\end{cases}
\end{equation}
with $K \geq 0$.   Here, the identity \eqref{EP_S3} follows from the second equation of \eqref{EP_S_S} and \eqref{BdCondi_S} thanks to the vector calculus identity \eqref{VecIden} and \eqref{EP_S4}.

We first prove some preliminary lemmas. Let $\mu_0(s)$ be a smooth cut-off function on $\R_{\geq 0}$ satisfying
\[
\mu_0(s) = 1 \quad \text{for } 0 \leq s \leq 1, \quad \mu_0(s) = 0 \quad \text{for }  s \geq 2.
\]
We set $\mu_r(\mbfx) = \mu_0(|\mbfx|^2/r^2)$ for all positive integers $r$. We remark that
\[
\lim_{r \to \infty} \int_{\R^n} f(\mbfx) \mu_r(\mbfx)\,d\mbfx = \int_{\R^n} f(\mbfx)\,d\mbfx
\]
for $f \in L^1(\R^n)$ by the Lebesgue dominated convergence theorem.

\begin{lemma} \label{lem:derivative_vanish}
For $f \in L^1(\R^n)$, we have
\[
\lim_{r \to \infty} \int_{\R^n} |f(\mbfx)| |\mbfx| |\nabla \mu_r(\mbfx)|\,d\mbfx = 0.
\]
\end{lemma}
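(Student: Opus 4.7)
The plan is to exploit the explicit form of $\nabla \mu_r$ and reduce the statement to an application of the dominated convergence theorem.

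First, I would compute the gradient. By the chain rule,
\[
\nabla \mu_r(\mbfx) = \mu_0'\!\left(\frac{|\mbfx|^2}{r^2}\right) \frac{2 \mbfx}{r^2}.
\]
Two structural observations follow. First, the support of $\nabla \mu_r$ is contained in the spherical shell $A_r := \{ \mbfx \in \R^n : r \leq |\mbfx| \leq \sqrt{2}\, r \}$, because $\mu_0'(s) = 0$ outside $[1,2]$. Second, on this shell the pointwise bound
\[
|\mbfx|\,|\nabla \mu_r(\mbfx)| \leq \| \mu_0' \|_{L^\infty(\R_{\geq 0})} \cdot \frac{2 |\mbfx|^2}{r^2} \leq 4 \, \| \mu_0' \|_{L^\infty(\R_{\geq 0})} =: C_0
\]
holds uniformly in $r$ and $\mbfx$.

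Next I would bound the integrand by an $r$-independent $L^1$ majorant. Combining the two observations gives
\[
|f(\mbfx)|\,|\mbfx|\,|\nabla \mu_r(\mbfx)| \leq C_0\, |f(\mbfx)|\, \mathbf{1}_{A_r}(\mbfx) \leq C_0\, |f(\mbfx)|,
\]
and the right-hand side belongs to $L^1(\R^n)$ by assumption. For any fixed $\mbfx \in \R^n$, the indicator $\mathbf{1}_{A_r}(\mbfx)$ is $0$ as soon as $r > |\mbfx|$, so the integrand tends to $0$ pointwise as $r \to \infty$.

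Finally, I would invoke the Lebesgue dominated convergence theorem with the majorant $C_0 |f|$ to conclude
\[
\lim_{r \to \infty} \int_{\R^n} |f(\mbfx)|\,|\mbfx|\,|\nabla \mu_r(\mbfx)|\,d\mbfx = 0.
\]
There is no real obstacle here; the only subtlety is noticing that the cancellation of the $|\mbfx|$ factor against the $r^{-2}$ factor is precisely what makes the integrand uniformly bounded on the shrinking-in-density shell $A_r$, which is what enables the dominated convergence argument.
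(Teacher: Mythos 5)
Your proposal is correct and follows essentially the same route as the paper: the explicit formula for $\nabla\mu_r$, the support restriction to the shell $r\le|\mbfx|\le\sqrt{2}\,r$, the uniform bound $|\mbfx||\nabla\mu_r(\mbfx)|\le 4\|\mu_0'\|_\infty$, and dominated convergence with majorant $C_0|f|$. Your explicit remark on the pointwise vanishing via the indicator of the shell is a detail the paper leaves implicit, but the argument is identical in substance.
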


\begin{proof}
We observe that
\[
\nabla \mu_r(\mbfx) = \frac{2}{r^2} \mu_0' \left(\frac{|\mbfx|^2}{r^2} \right) \mbfx,
\]
and that $\supp \nabla \mu_r \subset \{ \mbfx \in \R^n \mid r^2 \leq |\mbfx|^2 \leq 2 r^2 \}$. Thus, we have
\[
|f(\mbfx)| |\mbfx| |\nabla \mu_r(\mbfx)| \leq 4 \| \mu_0' \|_{C_b[0, \infty)} |f(\mbfx)|
\]
for a.e. $\mbfx \in \R^n$, where the right hand side is integrable. Here, $\| \cdot \|_{C_b[0, \infty)}$ denotes the maximum norm for bounded continuous functions on $[0, \infty)$. The conclusion follows from the Lebesgue dominated convergence theorem. 
\end{proof}

\begin{lemma}\label{Lem2}
Suppose that $(\rho, \mbfu, \phi)$ is a classical solution of the system \eqref{EP_S}--\eqref{P} satisfying $\rho - 1, \mbfu - \mbfc, \phi \in L^1(\R^n)$. Then, the following identities hold:  
\begin{align}
&\int_{\R^n} (\mbfu - \mbfc)\,d\mbfx =\mathbf{0}, \label{Ident_2}\\
&\int_{\R^n} (\rho \mbfu - \mbfc) \cdot \mbfu \,d\mbfx = 0. \label{Ident_3}\\
&\int_{\R^n} \rho (|\mbfu|^2 - |\mbfc|^2 ) \,d\mbfx = -|\mbfc|^2 \int_{\R^n} (\rho - 1)\,d\mbfx, \label{Ident_4}\\
&\int_{\R^n} \rho \phi \,d\mbfx = \int_{\R^n} e^\phi \phi \,d\mbfx + \int_{\R^n} |\nabla \phi|^2\,d\mbfx, \label{Ident_8}\\
&\int_{\R^n} (e^\phi - 1) \,d\mbfx = \int_{\R^n} (\rho - 1)\,d\mbfx, \label{Ident_9} \\
&\lim_{r \to \infty} \int_{\R^n} (\mbfx \cdot \nabla \rho) \mu_r \,d\mbfx = -n \int_{\R^n} (\rho - 1)\,d\mbfx, \label{Ident_5}\\
&\lim_{r \to \infty} \int_{\R^n} (\mbfx \cdot \nabla \rho) P(\rho) \mu_r \,d\mbfx = -n \int_{\R^n} \tilde{P}(\rho)\,d\mbfx, \label{Ident_6}\\
&\lim_{r \to \infty} \int_{\R^n} (\mbfx \cdot \nabla \rho) \phi \mu_r \,d\mbfx = - n \int_{\R^n} (\rho \phi -e^\phi + 1) \,d\mbfx + \frac{n - 2}{2} \int_{\R^n} |\nabla \phi|^2\,d\mbfx, \label{Ident_7}
\end{align} 
where $\tilde{P}$ is defined in \eqref{Ptil}.
\end{lemma}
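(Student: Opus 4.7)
The plan is to establish all eight identities by integrating against the cutoff $\mu_r$, performing integration by parts, and then passing to $r \to \infty$ with the help of Lemma \ref{lem:derivative_vanish} and the Lebesgue dominated convergence theorem, combined with the defining relations \eqref{EP_S1}--\eqref{EP_S4}. Before starting I would record a few auxiliary facts that are immediate from the hypotheses: $\rho$, $\mbfu - \mbfc$, and $\phi$ are bounded (continuous with prescribed limits at infinity, and $\rho$ bounded away from zero); the integrands $\tilde{P}(\rho)$ and $e^\phi \phi - e^\phi + 1$ lie in $L^1(\R^n)$ because they vanish quadratically at $\rho = 1$ and $\phi = 0$; $\Delta \phi = e^\phi - \rho \in L^1 \cap L^\infty$, which through a Green's-function representation yields $\nabla \phi \in L^2 \cap L^\infty$; and $\rho \mbfu - \mbfc = (\rho - 1)\mbfu + (\mbfu - \mbfc) \in L^1$.

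The Pohozaev-type identities \eqref{Ident_5}, \eqref{Ident_6}, \eqref{Ident_7} follow a common recipe: integrate $\mbfx \cdot \nabla(\,\cdot\,)$ by parts against $\mu_r$, split $\nabla \cdot (\mbfx \mu_r) = n \mu_r + \mbfx \cdot \nabla \mu_r$, retain the first piece by DCT, and discard the second via Lemma \ref{lem:derivative_vanish}. For \eqref{Ident_6} I rely on the antiderivative relation $P(\rho)\nabla \rho = \nabla \tilde{P}(\rho)$, verified directly from \eqref{P} and \eqref{Ptil} in both cases $\gamma = 1$ and $\gamma > 1$. For \eqref{Ident_7}, substituting $\rho = e^\phi - \Delta \phi$ splits the integrand into an $e^\phi$-part that is a total gradient via $e^\phi \phi \nabla \phi = \nabla(e^\phi \phi - e^\phi + 1)$, plus a $\Delta \phi$-part handled by two successive integrations by parts together with the classical Pohozaev computation $-\int \Delta u \cdot (\mbfx \cdot \nabla u)\,d\mbfx = \tfrac{2 - n}{2} \int |\nabla u|^2\,d\mbfx$; the stated coefficient $\tfrac{n - 2}{2}$ then emerges after using \eqref{Ident_8} to rewrite $\int e^\phi \phi + \int |\nabla \phi|^2$ as $\int \rho \phi$. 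Identity \eqref{Ident_8} itself comes from testing \eqref{EP_S2} against $\phi \mu_r$ and integrating by parts, and \eqref{Ident_9} from testing \eqref{EP_S2} against $\mu_r$ and observing $\int (\Delta \phi)\mu_r\,d\mbfx = \int \phi\, \Delta \mu_r \,d\mbfx \to 0$ since $|\Delta \mu_r| = O(r^{-2})$ on its support.

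The two less routine identities are \eqref{Ident_2} and \eqref{Ident_3}. For \eqref{Ident_2}, I would test the irrotationality condition \eqref{EP_S4} against $x_j \mu_r$, sum over $j$, and integrate by parts on each side. After the obvious cancellations this yields
\[
-(n - 1) \int_{\R^n} (u_k - c_k) \mu_r \,d\mbfx = \int_{\R^n} (u_k - c_k)\, \mbfx \cdot \nabla \mu_r\,d\mbfx - \sum_{j=1}^n \int_{\R^n} (u_j - c_j)\, x_j\, \partial_{x_k} \mu_r\,d\mbfx,
\]
and since $|\mbfx|\,|\nabla \mu_r|$ is uniformly bounded on $\supp \nabla \mu_r$ and $\mbfu - \mbfc \in L^1$, the right-hand side vanishes as $r \to \infty$ by DCT; since $n \geq 2$, dividing by $-(n - 1)$ gives \eqref{Ident_2}. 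For \eqref{Ident_3}, I decompose $\mbfu = \mbfc + (\mbfu - \mbfc)$ to split the claim into $\mbfc \cdot \int (\rho \mbfu - \mbfc)\,d\mbfx = 0$ and $\int (\rho \mbfu - \mbfc) \cdot (\mbfu - \mbfc)\,d\mbfx = 0$. The first is obtained by observing $\nabla \cdot (\rho \mbfu - \mbfc) = 0$ from \eqref{EP_S1} and testing against $x_k \mu_r$, again with $|\mbfx|\,|\nabla \mu_r|$ bounded and $\rho \mbfu - \mbfc \in L^1$. For the second, construct the potential $\varphi(\mbfx) := \int_0^1 \mbfx \cdot (\mbfu - \mbfc)(t\mbfx)\,dt$, which by \eqref{EP_S4} satisfies $\nabla \varphi = \mbfu - \mbfc$; then $\nabla \cdot ((\rho \mbfu - \mbfc)\varphi) = (\rho \mbfu - \mbfc) \cdot (\mbfu - \mbfc)$, and a cutoff IBP reduces the identity to showing $\int \varphi (\rho \mbfu - \mbfc) \cdot \nabla \mu_r \,d\mbfx \to 0$.

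This last vanishing is the main obstacle, since the integral representation yields only the linear growth bound $|\varphi(\mbfx)| \leq |\mbfx|\,\|\mbfu - \mbfc\|_{L^\infty(\R^n)}$. The saving grace is that this linear growth is exactly compensated by the uniform bound $|\mbfx|\,|\nabla \mu_r(\mbfx)| \leq 4 \|\mu_0'\|_{C_b[0, \infty)}$ on $\supp \nabla \mu_r$ that appears in the proof of Lemma \ref{lem:derivative_vanish}; consequently the integrand is dominated pointwise by a constant multiple of $|\rho \mbfu - \mbfc| \in L^1$ on annuli escaping to infinity, and DCT delivers the limit. Finally, \eqref{Ident_4} is an algebraic consequence of \eqref{Ident_2} and \eqref{Ident_3} via the pointwise identity
\[
\rho(|\mbfu|^2 - |\mbfc|^2) = (\rho \mbfu - \mbfc) \cdot \mbfu + \mbfc \cdot (\mbfu - \mbfc) - |\mbfc|^2 (\rho - 1),
\]
whose first two terms integrate to zero by \eqref{Ident_3} and \eqref{Ident_2}, leaving the required $-|\mbfc|^2 \int (\rho - 1)\,d\mbfx$.
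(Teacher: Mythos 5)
Your proof is correct and follows essentially the same route as the paper: test against the cutoff $\mu_r$, integrate by parts, kill the boundary terms with Lemma \ref{lem:derivative_vanish} and dominated convergence, and exploit the linear growth of the velocity potential being exactly compensated by the bound on $|\mbfx|\,|\nabla \mu_r|$. The only deviations are cosmetic and all check out --- you sum the irrotationality identity over $j$ for \eqref{Ident_2} where the paper uses a single pair $j \neq k$, you take the potential of $\mbfu - \mbfc$ (plus the separate momentum identity) rather than the potential of $\mbfu$ for \eqref{Ident_3}, and you substitute $\rho = e^\phi - \Delta\phi$ before rather than after the first integration by parts in \eqref{Ident_7}.
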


Before proving Lemma \ref{Lem2}, we remark integrability of functions in \eqref{Ident_2}--\eqref{Ident_7} under the assumption of Lemma \ref{Lem2}. By continuity and \eqref{EP_S5}, the solution $(\rho,\mbfu,\phi)$ satisfies
\begin{equation}\label{Bdness}
0 < \inf_{\mbfx \in \R^n} \rho  < \sup_{\mbfx \in \R^n} \rho < \infty,  \quad \rho, \mbfu, \phi \in C_b(\R^n).
\end{equation}
Since 
\[
|P(\rho)| = K \left| \int_1^\rho r^{\gamma-2}\,dr \right| \leq K \max \left\{ \left( \inf_{\mbfx \in \R^n} \rho \right)^{\gamma-2}, \left( \sup_{\mbfx \in \R^n} \rho \right)^{\gamma-2} \right\} |\rho - 1|
\]
and
\begin{equation} \label{tildeP}
\tilde{P}(\rho) = \frac{\rho}{\gamma} P(\rho) - K(\rho - 1),
\end{equation}
we have $P(\rho)$, $ \tilde{P}(\rho) \in L^1(\R^n) \cap C_b(\R^n)$ from $\rho-1 \in L^1(\R^n) \cap C_b(\R^n)$. The identity \eqref{tildeP} will be used later again. 

Also, since 
\[
 |e^\phi - 1| = \left| \int_0^\phi e^s\,ds \right| \leq e^{\| \phi \|_{C_b(\R^n)}} |\phi|, 
 \]
we have $e^\phi - 1 \in L^1(\R^n) \cap C_b(\R^n)$ from $\phi \in L^1(\R^n) \cap C_b(\R^n)$, and hence we see that $\Delta \phi = e^\phi - 1 - (\rho - 1) \in L^1(\R^n) \cap C_b(\R^n)$. In addition, we have $\nabla \phi \in L^2(\R^n)$. Indeed, by integration by parts, we have
\[
\int_{\R^n} |\nabla \varphi|^2\,d\mbfx = - \int_{\R^n} (\Delta \varphi) \varphi\,d\mbfx \leq \frac{1}{2} \left( \| \Delta \varphi \|_{L^2(\R^n)}^2 + \| \varphi \|_{L^2(\R^n)}^2 \right)
\]
for all $\varphi \in C^\infty_0(\R^n)$ and hence for all $\varphi \in L^2(\R^n)$ with $\Delta \varphi \in L^2(\R^n)$ by completion. Thus, we have $\nabla \phi \in L^2(\R^n)$ since $\phi, \Delta\phi \in L^1(\R^n) \cap C_b(\R^n) \subset L^2(\R^n)$.  

The integrability of other functions are straightforward to check.  

\begin{proof}[Proof of Lemma \ref{Lem2}]

For the first identity \eqref{Ident_2}, we make use of the irrotationality  \eqref{EP_S4}. For $j \neq k$, we have
\begin{align*}
0 =& \int_{\R^n} \left( \partial_{x_j}(u_k - c_k) - \partial_{x_k}(u_j - c_j) \right) x_k \mu_r\,d\mbfx\\
=& - \int_{\R^n} (u_k - c_k) x_k \partial_{x_j} \mu_r\,d\mbfx + \int_{\R^n} (u_j - c_j) x_k \partial_{x_k} \mu_r\,d\mbfx + \int_{\R^n} (u_j - c_j) \mu_r\,d\mbfx,
\end{align*}
and letting $r \to \infty$, we obtain
\[
\lim_{r \to \infty} \int_{\R^n} (u_j - c_j) \mu_r\,d\mbfx = 0, \quad  j = 1, \ldots, n
\]
by Lemma \ref{lem:derivative_vanish}. Since $\mbfu -\mbfc \in L^1(\R^n)$, we have \eqref{Ident_2} by the Lebesgue dominated convergence theorem. 

To prove \eqref{Ident_3}, we introduce the potential function $\psi$ such that $\mbfu = \nabla \psi$, whose existence is guaranteed by \eqref{EP_S4}. Without loss of generality, we set $\psi(\mathbf{0}) = 0$. We multiply \eqref{EP_S1} by $\psi \mu_r$ and integrate it. Then, we see that
\begin{equation*}
\begin{split}
0 
& = \int_{\R^n} \nabla \cdot (\rho \mbfu - \mbfc) \psi \mu_r\,d\mbfx\\ 
& = - \int_{\R^n} (\rho \mbfu - \mbfc) \cdot \mbfu \mu_r\,d\mbfx - \int_{\R^n} \big((\rho \mbfu - \mbfc) \cdot \nabla \mu_r \big)\psi\,d\mbfx,
\end{split}
\end{equation*}
from which \eqref{Ident_3} follows by noting that  
\[
| (\rho \mbfu - \mbfc) \psi(\mbfx)| = |\rho \mbfu - \mbfc|\left| \int_0^1 \frac{d}{ds} \psi(s \mbfx) \,ds \right| \leq  |\rho \mbfu - \mbfc||\mbfx| \| \mbfu \|_{L^\infty(\R^n)}
\]
and by applying Lemma \ref{lem:derivative_vanish} since $\rho \mbfu - \mbfc = (\rho-1)\mbfu +\mbfu - \mbfc \in L^1(\R^n)$.

 The identity \eqref{Ident_4} follows from \eqref{Ident_2} and \eqref{Ident_3} as follows:
\[
\begin{split}
\int_{\R^n} \rho(|\mbfu|^2-|\mbfc|^2)\,d\mbfx + \int_{\R^n} |\mbfc|^2(\rho-1)\,d\mbfx 
& =  \int_{\R^n} \rho|\mbfu|^2-|\mbfc|^2\,d\mbfx \\
& =  \int_{\R^n} (\rho \mbfu - \mbfc) \cdot \mbfu\,d\mbfx +  \int_{\R^n} \mbfc\cdot(\mbfu - \mbfc)\,d\mbfx  \\
& = 0.
\end{split}
\]

The identity \eqref{Ident_8} follows from \eqref{EP_S2}:
\begin{align*}
\int_{\R^n} \rho \phi \mu_r\,d\mbfx =& \int_{\R^n} (e^\phi - \Delta \phi) \phi \mu_r\,d\mbfx\\
=& \int_{\R^n} e^\phi \phi \mu_r\,d\mbfx + \int_{\R^n} |\nabla \phi|^2 \mu_r \,d\mbfx + \int_{\R^n} \phi \nabla \phi \cdot \nabla \mu_r\,d\mbfx,
\end{align*}
where the last term vanishes as $r \to \infty$ since 
\[
|\phi \nabla \phi \cdot \nabla \mu_r| \leq |\phi| | \nabla \phi |\frac{\sqrt{2} \| \mu_0' \|_{C_b[0, \infty)}}{r} \in L^1(\R^n).
\] 

To show the identity \eqref{Ident_9}, we use \eqref{EP_S2} again to obtain 
\begin{equation}\label{4} 
\int_{\R^n} (e^\phi - 1) \mu_r\,d\mbfx - \int_{\R^n} (\rho - 1) \mu_r\,d\mbfx = -\int_{\R^n} \Delta \phi \mu_r\,d\mbfx = - \int_{\R^n} \phi \Delta \mu_r\,d\mbfx.
\end{equation}
Since 
\[
\Delta \mu_r = \frac{4 |\mbfx|^2}{r^4} \mu_0'' \left( \frac{|\mbfx|^2}{r^2} \right) + \frac{2 n}{r^2} \mu_0' \left( \frac{|\mbfx|^2}{r^2} \right),
\]
we have
\[
|\Delta \mu_r| \leq \frac{8 \| \mu_0'' \|_{C_b[0, \infty)}}{r^2} + \frac{2 n \| \mu_0' \|_{C_b[0, \infty)}}{r^2}.
\]
Thus, the identity \eqref{Ident_9} is obtained from \eqref{4} by taking $r \to \infty$ since $e^\phi-1, \rho-1 \in L^1(\R^n)$.

For the identity \eqref{Ident_5}, we perform integration by parts to get
\begin{align*}
\int_{\R^n} (\mbfx \cdot \nabla \rho) \mu_r \,d\mbfx =& \int_{\R^n} (\mbfx \cdot \nabla (\rho - 1)) \mu_r \,d\mbfx\\
=& - n \int_{\R^n}  (\rho - 1) \mu_r \,d\mbfx - \int_{\R^n} (\rho - 1) (\mbfx \cdot \nabla \mu_r) \,d\mbfx,
\end{align*}
from which the identity \eqref{Ident_5} is obtained by Lemma \ref{lem:derivative_vanish} since $\rho - 1 \in L^1(\R^n)$. The identity \eqref{Ident_6} can be proved in the same way noting that $ P(\rho)\nabla \rho = \nabla \tilde{P}(\rho)$ and $\tilde{P}(\rho) \in L^1(\R^n)$.

Lastly, we prove \eqref{Ident_7}. Integrating by parts and using \eqref{EP_S2}, we have that
\[
\begin{split}
\int_{\R^n} (\mbfx \cdot \nabla \rho) \phi \mu_r \,d\mbfx
&  = -n\int_{\R^n} \rho \phi \mu_r \,d\mbfx  - \int_{\R^n} \rho (\nabla\phi\cdot\mbfx) \mu_r \,d\mbfx - \int_{\R^n} \rho \phi \mbfx\cdot\nabla\mu_r \,d\mbfx \\
& =  -n\int_{\R^n} \rho \phi \mu_r \,d\mbfx  + \int_{\R^n} (\Delta\phi - e^\phi) (\nabla\phi\cdot\mbfx) \mu_r \,d\mbfx - \int_{\R^n} \rho \phi \mbfx\cdot\nabla\mu_r \,d\mbfx  \\
& =  -n\int_{\R^n} (\rho \phi-e^\phi + 1) \mu_r \,d\mbfx  + \int_{\R^n} \Delta\phi (\nabla\phi\cdot\mbfx) \mu_r \,d\mbfx \\
& \quad - \int_{\R^n} (\rho \phi-e^\phi + 1) \mbfx\cdot\nabla\mu_r\,d\mbfx.
\end{split}
\]
Here, since $\rho\phi-e^\phi+1 \in L^1(\R^n)$, we only consider the second term. Integrating by parts twice, it is straightforward to see that 
\[
\begin{split}
\int_{\R^n} \Delta\phi (\nabla\phi\cdot\mbfx) \mu_r \,d\mbfx
 =& \frac{n-2}{2}\int_{\R^n} |\nabla\phi|^2\mu_r \,d\mbfx +\frac{1}{2} \int_{\R^n} |\nabla \phi|^2 \mbfx \cdot \nabla \mu_r\,d\mbfx\\
& - \int_{\R^n} (\mbfx \cdot \nabla \phi) (\nabla \phi \cdot \nabla \mu_r)\,d\mbfx.
\end{split}
\]
Here, the last two integrals vanish as $r \to \infty$ since $\nabla\phi \in L^2(\R^n)$. Hence, we obtain \eqref{Ident_7}.

We complete the proof.
\end{proof}

\begin{proof}[Proof of Theorem \ref{MainThm}]
Suppose that $(\rho,\mbfu,\phi)$ is a classical solution of the system \eqref{EP_S} satisfying $\rho-1,\mbfu-\mbfc,\phi \in L^1(\R^n)$.  We show \eqref{En_1} and \eqref{En_2}, and then derive a contradiction.

To show \eqref{En_1},  we first derive the following identity:
\begin{equation} \label{2}
0 = - \lim_{r \to \infty} \frac{1}{2} \int_{\R^n} (\mbfx \cdot \nabla \rho) (|\mbfu|^2 -|\mbfc|^2) \mu_r\,d\mbfx - \frac{n}{2} \int_{\R^n} \rho (|\mbfu|^2 - |\mbfc|^2)\,d\mbfx.
\end{equation}
From \eqref{EP_S1} and integration by parts, we have
\begin{align*}
0 =& \int_{\R^n} \nabla \cdot (\rho \mbfu - \mbfc) (\mbfx \cdot \mbfu) \mu_r\,d\mbfx\\
=& - \sum_{j,k = 1}^n \int_{\R^n} (\rho u_j - c_j) x_k \partial_{x_j} u_k \mu_r\,d\mbfx \\ 
&  - \int_{\R^n} (\rho \mbfu - \mbfc) \cdot \mbfu \mu_r\,d\mbfx  -\int_{\R^n} (\mbfx \cdot \mbfu) (\rho \mbfu - \mbfc) \cdot \nabla \mu_r\,d\mbfx. 
\end{align*}
Here, the last two integrals vanish as $r \to \infty$ by the identity  \eqref{Ident_3} and Lemma \ref{lem:derivative_vanish}. Thus, in what follows, we only discuss the first term.

By the irrotationality \eqref{EP_S4}, we observe that
\begin{align*}
\sum_{j,k = 1}^n \int_{\R^n} (\rho u_j - c_j) x_k \partial_{x_j} u_k \mu_r\,d\mbfx =& \sum_{j,k = 1}^n \int_{\R^n} (\rho u_j - c_j) x_k \partial_{x_k} u_j \mu_r\,d\mbfx\\
=&\frac{1}{2} \int_{\R^n} \rho \mbfx \cdot \nabla |\mbfu|^2 \mu_r\,d\mbfx - \int_{\R^n} (\mbfx \cdot \nabla (\mbfc \cdot \mbfu) ) \mu_r\,d\mbfx.
\end{align*}
Here, the latter part vanishes as $r \to \infty$ thanks to \eqref{Ident_2} and Lemma \ref{lem:derivative_vanish} since we have 
\[
\int_{\R^n} (\mbfx \cdot \nabla (\mbfc \cdot \mbfu) ) \mu_r\,d\mbfx = - n \mbfc \cdot \int_{\R^n} (\mbfu - \mbfc) \mu_r\,d\mbfx - \mbfc \cdot \int_{\R^n} (\mbfu - \mbfc) (\mbfx \cdot \nabla \mu_r)\,d\mbfx.
\]
 On the other hand, for the former part, we have
\begin{align*}
\frac{1}{2} \int_{\R^n} \rho \mbfx \cdot \nabla |\mbfu|^2 \mu_r\,d\mbfx =& - \frac{1}{2} \int_{\R^n} (\mbfx \cdot \nabla \rho) (|\mbfu|^2 -|\mbfc|^2) \mu_r\,d\mbfx - \frac{n}{2} \int_{\R^n} \rho (|\mbfu|^2 - |\mbfc|^2)\mu_r\,d\mbfx\\
&-\frac{1}{2} \int_{\R^n} \rho (|\mbfu|^2 - |\mbfc|^2) \mbfx \cdot \nabla \mu_r\,d\mbfx,
\end{align*}
where the last term vanishes as $r \to \infty$ by Lemma \ref{lem:derivative_vanish}. By summarizing the above argument, we obtain \eqref{2}.

On the other hand, from \eqref{EP_S3}, \eqref{Ident_6} and \eqref{Ident_7}, we have
\begin{equation}\label{3}
\begin{split}
&\frac{1}{2} \lim_{r \to \infty} \int_{\R^n} (\mbfx \cdot \nabla \rho) (|\mbfu|^2 - |\mbfc|^2) \mu_r\,d\mbfx\\ 
=& - \lim_{r \to \infty} \int_{\R^n} (\mbfx \cdot \nabla \rho) (\phi + P(\rho)) \mu_r\,d\mbfx\\
=& n \int_{\R^n} (\rho \phi - e^\phi + 1)\,d\mbfx - \frac{n - 2}{2} \int_{\R^n} |\nabla \phi|^2\,d\mbfx + n \int_{\R^n} \tilde{P}(\rho)\,d\mbfx.
\end{split}
\end{equation}
Combining \eqref{2} and \eqref{3}, we obtain \eqref{En_1}. Moreover, from  \eqref{En_1} and \eqref{Ident_8}, we have \eqref{En_2}. 

Now, we substitute \eqref{Ident_4} into \eqref{En_2} to get
\begin{equation}\label{En_A2}
0  = -\frac{|\mbfc|^2}{2} \int_{\R^n} (\rho - 1)\,d\mbfx + \int_{\R^n} \tilde{P}(\rho)\,d\mbfx + \int_{\R^n} \tilde{Q}(\phi) \,d\mbfx.
\end{equation}
We note that $\tilde{P}(\rho) \geq 0$ and $\tilde{Q}(\phi) \geq 0$, and they vanish only at $\rho=1$ and $\phi=0$, respectively. Moreover, if $\rho=1$, then $\mbfu=\mbfc$ since
\begin{equation}\label{5}
\int_{\R^n} |\mbfu-\mbfc|^2  \,d\mbfx= \int_{\R^n} (\mbfu - \mbfc) \cdot \mbfu \,d\mbfx - \int_{\R^n} (\mbfu - \mbfc) \cdot \mbfc \,d\mbfx = 0
\end{equation}
by \eqref{Ident_2} and \eqref{Ident_3}.
Hence, we conclude that  for $\mbfc=\mathbf{0}$, $(\rho,\mbfu,\phi)$ must be trivial, and for $|\mbfc| \neq 0$, if $(\rho,\mbfu,\phi)$ is nontrivial, then we must have 
\begin{equation*} 
\int_{\R^n} (\rho-1) \,d\mbfx > 0.
\end{equation*}
In what follows, we derive  $\int_{\R^n} (\rho-1) \,d\mbfx \leq 0$,
contrary to the above strict inequality.

Substituting \eqref{EP_S3} and \eqref{Ident_9} into  \eqref{En_1}, we have 
\begin{equation*} 
\begin{split}
0 
&  = \frac{1}{2}\int_{\R^n}\rho ( |\mbfu|^2 -|\mbfc|^2 ) \,d\mbfx + \int_{\R^n}\tilde{P}(\rho)\,d\mbfx \\
& \quad + \int_{\R^n} \rho \left(\frac{|\mbfc|^2}{2} - \frac{|\mbfu|^2}{2} - P(\rho) \right) - (\rho - 1) \,d\mbfx- \frac{n - 2}{2n} \int_{\R^n} |\nabla \phi|^2\,d\mbfx \\
& =  \int_{\R^n} \tilde{P}(\rho) - \rho P(\rho)\,d\mbfx  - \int_{\R^n} (\rho - 1) \,d\mbfx - \frac{n - 2}{2n} \int_{\R^n} |\nabla \phi|^2\,d\mbfx.
\end{split}
\end{equation*} 
From \eqref{tildeP}, we observe 
\begin{equation}\label{6}
\tilde{P}(\rho) - \rho P(\rho) = (1 - \gamma) \tilde{P}(\rho) - K \gamma (\rho - 1).
\end{equation}
Hence, since $\tilde{P} \geq 0$ and $1- \gamma \leq 0$, we have
\begin{equation*} 
\begin{split}
0 
& = (1 - \gamma) \int_{\R^n} \tilde{P}(\rho)\,d\mbfx - (K \gamma + 1) \int_{\R^n} (\rho-1) \,d\mbfx - \frac{n - 2}{2n} \int_{\R^n} |\nabla \phi|^2\,d\mbfx \\
& \leq - (K \gamma + 1) \int_{\R^n}(\rho-1) \,d\mbfx,
\end{split}
\end{equation*}
or since $K\gamma +1 >0$, 
\[
\int_{\R^n} (\rho - 1)\,d\mbfx \leq 0,
\]
which is a contradiction. This completes the proof.
\end{proof}

\section{Nonexistence for the Two-species Euler-Poisson system} 
The strategy of the proof of Theorem \ref{MainThm} also applies to the nonexistence of solitary waves for the two-species Euler-Poisson system. In this section, we consider the following irrotational steady two-species Euler-Poisson system:
\begin{subequations}\label{2EP_S}
\begin{align}[left = \empheqlbrace\,] 
& \nabla \cdot (\rho_\alpha \mbfu_\alpha) = 0 , \label{2EP_Sp} \\
& \kappa_\alpha\phi = \frac{\eta_\alpha}{2}(|\mbfc_\alpha|^2 -  |\mbfu_\alpha|^2) - P_\alpha(\rho_\alpha),  \label{2EP_Sp1}  \\
& -\Delta \phi = \rho_i - \rho_e,   \label{2EP_Sp2} \\
& \partial_{x_j} u_{\alpha, k} = \partial_{x_k} u_{\alpha, j}, \qquad j,k = 1, \ldots,n, \label{2EP_S4}\\ 
& (\rho_\alpha,  \mbfu_\alpha,  \phi) \to (1, \mbfc_\alpha, 0) \text{ as } |\mbfx| \to +\infty,   \qquad (\rho_\alpha > 0, \;\mbfu_\alpha, \mathbf{x} \in \mathbb{R}^n, \; n=2,3), \label{2EP_S5}
\end{align}
\end{subequations}
where $\alpha$ is the index for the ions ($\alpha = i$) and the electrons ($\alpha= e$); $\rho_\alpha$ and $\mbfu_\alpha $ represent the density and the fluid velocity field of each species;  $\phi$ denotes the electric potential;
\begin{equation}\label{P_al}
P_\alpha(\rho) := 
\begin{cases}
K_\alpha \ln \rho_\alpha, &\gamma_\alpha = 1, \\
\frac{K_\alpha \gamma_\alpha}{\gamma_\alpha - 1} (\rho_\alpha^{\gamma_\alpha-1} - 1), &\gamma_\alpha > 1, 
\end{cases}
\end{equation}
with $K_\alpha \geq 0$, and
\begin{align}\label{kapa}
& \kappa_\alpha :=
\begin{cases}
1, & \alpha = i, \\
-1, & \alpha = e,
\end{cases} 
\qquad \eta_\alpha :=
\begin{cases}
1, & \alpha = i, \\
\geq 0, & \alpha = e.
\end{cases} 
\end{align}
In \eqref{kapa},  $\eta_e \geq 0$ is a constant for the ratio of the electron mass to the ion mass. In fact, the Boltzmann relation $\rho_e = e^\phi$ in \eqref{EP} is obtained from \eqref{2EP_Sp1} by letting $(\eta_e,\gamma_e) = (0,1)$ and $K_e>0$ (zero-mass isothermal electrons). In the context of plasma physics, $\eta_e$ is very small. Moreover, due to the high mobility of electrons, the isothermal pressure is often assumed for electrons \cite{Ch}.

\begin{theorem}\label{MainThm2}
For $n=2$ and $n=3$, consider the system \eqref{2EP_S}--\eqref{kapa} with any given constant vector $(\mbfc_i,\mbfc_e) \in \R^n\times\R^n$. Then, for $(K_i, K_e) \neq (0,0)$, \eqref{2EP_S}--\eqref{kapa} does not admit any nontrivial classical solution satisfying $\rho_\alpha-1, \mbfu_\alpha-\mbfc_\alpha, \phi \in L^1(\R^n)$, where $\alpha=i,e$.
\end{theorem}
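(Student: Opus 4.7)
The approach is to mimic the proof of Theorem \ref{MainThm} species by species. Since the continuity equation \eqref{2EP_Sp}, irrotationality \eqref{2EP_S4}, and the far-field condition \eqref{2EP_S5} decouple across species, the identities \eqref{Ident_2}--\eqref{Ident_6} of Lemma \ref{Lem2} and the kinetic Pohozaev identity \eqref{2} transfer verbatim to each $\alpha \in \{i,e\}$ with $(\rho,\mbfu,\mbfc,P,\tilde{P})$ replaced by $(\rho_\alpha,\mbfu_\alpha,\mbfc_\alpha,P_\alpha,\tilde{P}_\alpha)$ (defining $\tilde{P}_\alpha$ as in \eqref{Ptil} with subscript $\alpha$). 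From the Poisson equation $-\Delta\phi = \rho_i - \rho_e$, testing against $\mu_r$ yields the quasi-neutrality identity $M := \int_{\R^n}(\rho_i-1)\,d\mbfx = \int_{\R^n}(\rho_e-1)\,d\mbfx$, and testing against $\phi\mu_r$ yields $\int_{\R^n}|\nabla\phi|^2\,d\mbfx = \int_{\R^n}(\rho_i - \rho_e)\phi\,d\mbfx$ (the two-species replacements for \eqref{Ident_9} and \eqref{Ident_8}).

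I would then derive the two-species analog of \eqref{En_A2} as follows. For each species I multiply the kinetic Pohozaev identity by $\eta_\alpha$ and use Bernoulli \eqref{2EP_Sp1} in the form $\tfrac{\eta_\alpha}{2}(|\mbfu_\alpha|^2-|\mbfc_\alpha|^2) = -\kappa_\alpha\phi - P_\alpha(\rho_\alpha)$ together with the species-wise analog of \eqref{Ident_6}, yielding
\[
\kappa_\alpha \lim_{r\to\infty}\int_{\R^n}(\mbfx\cdot\nabla\rho_\alpha)\phi\mu_r\,d\mbfx = n\int_{\R^n}\tilde{P}_\alpha(\rho_\alpha)\,d\mbfx + \frac{n\eta_\alpha}{2}\int_{\R^n}\rho_\alpha(|\mbfu_\alpha|^2-|\mbfc_\alpha|^2)\,d\mbfx,
\]
which remains valid when $\eta_e = 0$ since Bernoulli degenerates to $\phi = P_e(\rho_e)$ and the same identity follows directly from the species-wise analog of \eqref{Ident_6}. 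Summing over $\alpha$ exploits the cancellation $\kappa_i = +1$, $\kappa_e = -1$: the left side becomes $-\lim\int(\mbfx\cdot\nabla\Delta\phi)\phi\mu_r\,d\mbfx$, which a standard double integration by parts (using $\nabla\phi \in L^2$) evaluates to $-\tfrac{n+2}{2}\int|\nabla\phi|^2\,d\mbfx$. Combining with the species-wise analog of \eqref{Ident_4} and quasi-neutrality produces
\[
\frac{M}{2}\bigl(\eta_i|\mbfc_i|^2 + \eta_e|\mbfc_e|^2\bigr) = \sum_{\alpha \in \{i,e\}}\int_{\R^n}\tilde{P}_\alpha(\rho_\alpha)\,d\mbfx + \frac{n+2}{2n}\int_{\R^n}|\nabla\phi|^2\,d\mbfx.
\]

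The right side above is non-negative and vanishes only when $\rho_\alpha \equiv 1$ and $\phi \equiv 0$, which combined with \eqref{Ident_2}--\eqref{Ident_3} and the argument of \eqref{5} forces $\mbfu_\alpha \equiv \mbfc_\alpha$, giving triviality. Hence for any nontrivial solution with $\eta_i|\mbfc_i|^2 + \eta_e|\mbfc_e|^2 > 0$, one must have $M > 0$. To contradict this I would substitute Bernoulli and the species-wise analog of \eqref{Ident_4} into $\int|\nabla\phi|^2 = \int(\rho_i - \rho_e)\phi$ to obtain a second identity, eliminate $\int|\nabla\phi|^2$ between the two identities, and apply the species-wise analogs of \eqref{tildeP} and \eqref{6} to arrive at
\[
M(K_i\gamma_i + K_e\gamma_e) = \sum_{\alpha \in \{i,e\}}(1-\gamma_\alpha)\int_{\R^n}\tilde{P}_\alpha(\rho_\alpha)\,d\mbfx - \frac{n-2}{2n}\int_{\R^n}|\nabla\phi|^2\,d\mbfx \leq 0,
\]
where non-positivity uses $\gamma_\alpha \geq 1$, $\tilde{P}_\alpha \geq 0$ and $n \geq 2$. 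The hypothesis $(K_i,K_e)\neq(0,0)$ together with $\gamma_\alpha \geq 1$ gives $K_i\gamma_i + K_e\gamma_e > 0$, forcing $M \leq 0$ and contradicting $M > 0$.

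The main obstacle I anticipate is the bookkeeping of the sign conventions $\kappa_\alpha$, $\eta_\alpha$, $K_\alpha$, $\gamma_\alpha$, with a uniform treatment of the degenerate case $\eta_e = 0$, together with the edge case $\eta_i|\mbfc_i|^2 + \eta_e|\mbfc_e|^2 = 0$ (which forces $\mbfc_i = \mathbf{0}$ and, when $\eta_e > 0$, also $\mbfc_e = \mathbf{0}$). In that edge case the main identity immediately forces $\rho_\alpha \equiv 1$, $\phi \equiv 0$ and hence $\mbfu_i \equiv \mbfc_i = \mathbf{0}$; when additionally $\eta_e = 0$ with $\mbfc_e \neq \mathbf{0}$, the leftover constraints $\nabla\cdot\mbfu_e = 0$, $\nabla\times\mbfu_e = \mathbf{0}$, and $\mbfu_e - \mbfc_e \in L^1(\R^n)$ make $\mbfu_e - \mbfc_e$ a harmonic $L^1$ vector field on $\R^n$, which must vanish by the mean value property, so triviality still follows.
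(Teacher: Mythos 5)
Your proposal is correct and follows essentially the same route as the paper's proof: the species-wise Pohozaev identities, the quasi-neutrality relation $\int_{\R^n}(\rho_i-1)\,d\mbfx=\int_{\R^n}(\rho_e-1)\,d\mbfx$, the identity $\int_{\R^n}|\nabla\phi|^2\,d\mbfx=\int_{\R^n}(\rho_i-\rho_e)\phi\,d\mbfx$, and the two energy identities whose combination forces $\int_{\R^n}(\rho_i-1)\,d\mbfx$ to be simultaneously positive and non-positive. Your elimination of $\int_{\R^n}|\nabla\phi|^2\,d\mbfx$ is an algebraically equivalent rearrangement of the paper's substitution of Bernoulli's relation into its identity \eqref{2spe0}, and your explicit treatment of the degenerate case $\eta_e=0$, $\mbfc_e\neq\mathbf{0}$ is a welcome (slightly more careful) elaboration of what the paper leaves implicit.
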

 
 We remark that our strategy does not work for the case $K_i=K_e=0$, and in this case, the one-dimensional model does not admit traveling solitary waves.\footnote{When $K_i=K_e=0$, it is straightforward to see that the one-dimensional system \eqref{2EP_Sp}--\eqref{2EP_Sp2} with \eqref{2EP_S5} is reduced to the following second order ODE:
\[
\phi''=\sqrt{\frac{c_e^2}{c_e^2+2\phi}} - \sqrt{\frac{c_i^2}{c_i^2-2\phi}},
\]
where $c_i,c_e \neq 0$. The stationary point $(\phi,\phi')=(0,0)$ is a center point, not a saddle point. If $c_i=c_e=0$, then the solution must be trivial. For the other pressure laws, we refer to \cite{Cor} for the existence results.
}

To prove Theorem \ref{MainThm2}, we define for $\alpha = i,e$,
\[
\tilde{P}_\alpha(\rho):=
\begin{cases}
K_\alpha (\rho \ln \rho - \rho + 1), &\gamma_\alpha = 1,  \\
\frac{K_\alpha \rho}{\gamma_\alpha - 1} (\rho^{\gamma_\alpha-1} - 1) - K_\alpha(\rho - 1), &\gamma_\alpha > 1, 
\end{cases}
\]
with $K_\alpha \geq 0$. We note that $\tilde{P}_\alpha$ corresponds to \eqref{Ptil}. The following lemma can be proved in much the same way as Lemma \ref{Lem2}. We omit the details.
\begin{lemma}\label{Lem3}
Suppose that $(\rho_i, \mbfu_i, \rho_e, \mbfu_e, \phi)$ is a classical solution of the system \eqref{2EP_S}--\eqref{kapa} satisfying $\rho_\alpha - 1, \mbfu_\alpha - \mbfc_\alpha, \phi \in L^1(\R^n)$ for $\alpha=i,e$. Then, the identities in Lemma \ref{Lem2} with $(\rho,\mbf{u},P,\tilde{P})$ replaced by $(\rho_\alpha,\mbf{u}_\alpha,P_\alpha,\tilde{P}_\alpha)$ hold except \eqref{Ident_8}, \eqref{Ident_9}, and \eqref{Ident_7}; it holds that
\begin{align}
& \int_{\R^n} ( \rho_i-\rho_e)\phi\,d\mbfx = \int_{\R^n}|\nabla\phi|^2\,d\mbfx, \label{2sp3}\\
&  \int_{\R^n} (\rho_i - 1) \,d\mbfx = \int_{\R^n} (\rho_e - 1) \,d\mbfx, \label{2sp4}\\
& \sum_{\alpha=i,e}\lim_{r \to \infty}\int_{\R^n} (\mbfx \cdot \nabla \rho_\alpha)\kappa_\alpha \phi \mu_r\,d\mbfx = -n\int_{\R^n}(\rho_i - \rho_e)\phi\,d\mbfx + \frac{n-2}{2}\int_{\R^n}|\nabla\phi|^2\,d\mbfx.\label{2sp5}
\end{align}  
\end{lemma}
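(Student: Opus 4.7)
The plan is to mirror the proof of Lemma \ref{Lem2} species-by-species, replacing $(\rho, \mbfu, P, \tilde{P})$ by $(\rho_\alpha, \mbfu_\alpha, P_\alpha, \tilde{P}_\alpha)$ for $\alpha = i, e$. The analogues of \eqref{Ident_2}--\eqref{Ident_6} carry over verbatim because those arguments use only the continuity equation \eqref{2EP_Sp}, the irrotationality \eqref{2EP_S4}, the far-field condition \eqref{2EP_S5}, and the integrability hypotheses $\rho_\alpha - 1, \mbfu_\alpha - \mbfc_\alpha \in L^1(\R^n)$, none of which couple the two species. The preliminary integrability discussion preceding the proof of Lemma \ref{Lem2} also transfers with minor adjustments: boundedness of each $\rho_\alpha, \mbfu_\alpha, \phi$ follows from continuity and the far-field condition, $P_\alpha(\rho_\alpha), \tilde{P}_\alpha(\rho_\alpha) \in L^1(\R^n) \cap C_b(\R^n)$ by the same mean-value estimate, and $\nabla \phi \in L^2(\R^n)$ via the same completion argument, now using $\Delta \phi = \rho_e - \rho_i \in L^1(\R^n) \cap C_b(\R^n) \subset L^2(\R^n)$.

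The three replacements \eqref{2sp3}--\eqref{2sp5} are exactly the places where the proof of Lemma \ref{Lem2} exploited the particular form $\rho = e^\phi - \Delta \phi$, and must be redone with $-\Delta \phi = \rho_i - \rho_e$. For \eqref{2sp3}, multiply the Poisson equation by $\phi \mu_r$ and integrate by parts once; the boundary-type remainder $\int_{\R^n} \phi \nabla\phi \cdot \nabla \mu_r \, d\mbfx$ vanishes as $r \to \infty$ by the same $L^1$ estimate used for \eqref{Ident_8}. For \eqref{2sp4}, test the Poisson equation against $\mu_r$ and integrate by parts twice to obtain $\int_{\R^n}(\rho_i - \rho_e)\mu_r \, d\mbfx = -\int_{\R^n} \phi \, \Delta \mu_r \, d\mbfx$, and then use $|\Delta \mu_r| = O(r^{-2})$ together with $\phi \in L^1(\R^n)$ to let $r \to \infty$.

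The identity \eqref{2sp5} is the analogue of \eqref{Ident_7} and is where the sign bookkeeping through $\kappa_\alpha$ matters. Integration by parts on each species gives
\[
\int_{\R^n} (\mbfx \cdot \nabla \rho_\alpha)\kappa_\alpha \phi \mu_r \, d\mbfx = -n \int_{\R^n} \rho_\alpha \kappa_\alpha \phi \mu_r \, d\mbfx - \int_{\R^n} \rho_\alpha \kappa_\alpha (\mbfx \cdot \nabla \phi) \mu_r \, d\mbfx - \int_{\R^n} \rho_\alpha \kappa_\alpha \phi (\mbfx \cdot \nabla \mu_r) \, d\mbfx.
\]
Summing over $\alpha$ and using $\sum_\alpha \kappa_\alpha \rho_\alpha = \rho_i - \rho_e = -\Delta \phi$, the middle contribution becomes $\int_{\R^n} \Delta \phi \, (\mbfx \cdot \nabla \phi) \mu_r \, d\mbfx$, which by the same Pohozaev-type integration by parts used in the proof of \eqref{Ident_7} tends to $\tfrac{n-2}{2}\int_{\R^n} |\nabla \phi|^2 \, d\mbfx$ (the two remainder integrals $\int |\nabla\phi|^2 \mbfx\cdot\nabla\mu_r$ and $\int(\mbfx\cdot\nabla\phi)(\nabla\phi\cdot\nabla\mu_r)$ vanish because $\nabla \phi \in L^2(\R^n)$). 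The first term tends to $-n\int_{\R^n}(\rho_i - \rho_e)\phi \, d\mbfx$ by dominated convergence, and the last vanishes by Lemma \ref{lem:derivative_vanish} since $(\rho_i-\rho_e)\phi \in L^1(\R^n)$.

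No step presents a genuine obstacle; the entire lemma is a careful unwinding of the one-species proof. The only bookkeeping subtlety is the sign $\kappa_\alpha$: one must sum the two momentum equations (not subtract) in such a way that the combination $\sum_\alpha \kappa_\alpha \rho_\alpha$ matches the right-hand side of the Poisson equation, which is precisely what makes the electric contribution collapse into terms involving only $\phi$ and $\nabla \phi$.
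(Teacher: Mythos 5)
Your proposal is correct and is precisely the argument the paper intends: the authors omit the proof of Lemma \ref{Lem3}, stating only that it "can be proved in much the same way as Lemma \ref{Lem2}," and your species-by-species transcription of \eqref{Ident_2}--\eqref{Ident_6} together with the reworked Poisson-equation identities \eqref{2sp3}--\eqref{2sp5} (using $\sum_\alpha \kappa_\alpha \rho_\alpha = -\Delta\phi$ and the same Pohozaev manipulation of $\int \Delta\phi\,(\mbfx\cdot\nabla\phi)\mu_r\,d\mbfx$) supplies exactly the omitted details. Your care in summing over $\alpha$ before passing to the limit $r\to\infty$, so that only the integrable combination $(\rho_i-\rho_e)(\mbfx\cdot\nabla\phi)\mu_r$ appears, is the right way to handle the one genuinely delicate point.
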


The identities \eqref{2sp3}, \eqref{2sp4}, and \eqref{2sp5} in Lemma \ref{Lem3} correspond to the identities \eqref{Ident_8}, \eqref{Ident_9}, and \eqref{Ident_7} in Lemma \ref{Lem2}, respectively. In Lemma \ref{Lem3}, in particular, we have 
\begin{equation}\label{2sp1}
\int_{\R^n} \rho_\alpha (|\mbfu_\alpha|^2 - |\mbfc_\alpha|^2)\,d\mbfx = -|\mbfc_\alpha|^2\int_{\R^n} (\rho_\alpha - 1)\,d\mbfx, \quad (\alpha=i,e).
\end{equation}

\begin{proof}[Proof of Theorem \ref{MainThm2}]
 Suppose that $(\rho_i, \mbfu_i, \rho_e, \mbfu_e, \phi)$ is a classical solution of the system \eqref{2EP_S}--\eqref{kapa} satisfying $\rho_\alpha - 1, \mbfu_\alpha - \mbfc_\alpha, \phi \in L^1(\R^n)$ for $\alpha=i, e$. Following the proof of Theorem \ref{MainThm}, it is straightforward to see that 
\begin{equation}\label{2spe0}
\begin{split}
\sum_{\alpha=i,e}\frac{\eta_\alpha}{2}\int_{\R^n} \rho_\alpha (|\mbfu_\alpha|^2 - |\mbfc_\alpha|^2)\,d\mbfx  
& = \sum_{\alpha=i,e} \lim_{r \to \infty} \frac{\eta_\alpha}{2n} \int_{\R^n} (\mbfx \cdot \nabla \rho_\alpha) (|\mbfc_\alpha|^2- |\mbfu_\alpha|^2 ) \mu_r\,d\mbfx \\
& =\sum_{\alpha=i,e} \lim_{r \to \infty} \frac{1}{n} \int_{\R^n} (\mbfx \cdot \nabla \rho_\alpha) (\kappa_\alpha\phi + P_\alpha(\rho_\alpha)) \mu_r\,d\mbfx \\
& = - \int_{\R^n} (\rho_i - \rho_e)\phi\,d\mbfx + \frac{n-2}{2n}\int_{\R^n} |\nabla\phi|^2\,d\mbfx \\
& \quad - \int_{\R^n} \tilde{P}_i(\rho_i) + \tilde{P}_e(\rho_e) \,d\mbfx.
\end{split}
\end{equation}
Combined with \eqref{2sp3}, \eqref{2sp4}, and \eqref{2sp1}, it follows from \eqref{2spe0} that  
\begin{equation}\label{2spe1}
\frac{1}{2}(|\mbfc_i|^2 + \eta_e  |\mbfc_e|^2)\int_{\R^n} (\rho_i - 1) \,d\mbfx  =  \frac{n+2}{2n}\int_{\R^n} |\nabla\phi|^2\,d\mbfx +  \int_{\R^n} \tilde{P}_i(\rho_i) + \tilde{P}_e(\rho_e) \,d\mbfx.
\end{equation}

We recall the non-negativity of $\tilde{P}_\alpha$. If $\rho_i=\rho_e=1$, then any smooth $\phi$ satisfying \eqref{2EP_Sp} and \eqref{2EP_S5} must be identically $0$ by Liouville's theorem, and moreover, $\mbfu_\alpha=\mbfc_\alpha$ by Lemma \ref{Lem3}  (see \eqref{5}). If $\phi=0$, then $\rho_i = \rho_e =1$ or $\rho_i = \rho_e \neq 1$. Hence, we conclude from \eqref{2spe1} that for the cases $\mbfc_i=\mbfc_e=\mbf0$ and $|\mbfc_i|=\mu_e=0$, $(\rho_\alpha,\mbfu_\alpha,\phi)$ must be trivial, and for the case $|(\mbfc_i,\mbfc_e)| \neq 0$ and $\mu_e>0$, any nontrivial $(\rho_\alpha,\mbfu_\alpha,\phi)$  must satisfy 
\[
\int_{\R^n} (\rho_i - 1)\,d\mbfx > 0.
\] 

On the other hand, substituting  \eqref{2EP_Sp1} into \eqref{2spe0}, and using $\tilde{P}_\alpha \geq 0$ and \eqref{2sp4} (see also \eqref{6}), we obtain
\begin{equation*}
\begin{split}
0 
& = \int_{\R^n} \tilde{P}_i(\rho_i) - \rho_i P_i(\rho_i) + \tilde{P}_e(\rho_e)  - \rho_e P_e(\rho_e)\,d\mbfx - \frac{n-2}{2n}\int_{\R^n} |\nabla\phi|^2\,d\mbfx \\
& \leq \int_{\R^n} \tilde{P}_i(\rho_i) - \rho_i P_i(\rho_i) + \tilde{P}_e(\rho_e)  - \rho_eP_e(\rho_e)\,d\mbfx   \\
& = \int_{\R^n}  (1-\gamma_i)\tilde{P}_i(\rho_i) - K_i\gamma_i(\rho_i-1) + (1-\gamma_e)\tilde{P}_e(\rho_e) - K_e\gamma_e(\rho_e-1)  \,d\mbfx \\
& \leq -K_i\gamma_i\int_{\R^n}(\rho_i - 1)\,d\mbfx  - K_e\gamma_e \int_{\R^n}(\rho_e - 1)\,d\mbfx \\
& = - (K_i\gamma_i + K_e\gamma_e)\int_{\R^n}(\rho_i - 1)\,d\mbfx.
\end{split}
\end{equation*}
Since $(K_i, K_e) \neq (0,0)$ and $\gamma_i,\gamma_e \geq 1$, we can divide the above inequality by $-(K_i\gamma_i+K_e\gamma_e)<0$ to obtain a contradiction. We complete the proof.
\end{proof}

\section{Appendix}
\subsection{Derivation of the three-dimensional KP-I and KP-II equations}
We present a formal derivation of the KP equations from the following system
\begin{equation}\label{EP_A}
\left\{
\begin{array}{l l}
\partial_{t} \rho + \nabla \cdot (\rho \mathbf{u}) = 0, &  \\ 
\rho\left(\partial_{t} \mathbf{u}  + (\mathbf{u} \cdot \nabla ) \mathbf{u} \right) +   \nabla  p(\rho) = - \rho\nabla  \phi, &  ( \mathbf{x}, \; t\ge 0, \; n=2,3),\\
\mu\Delta \phi = \rho - e^\phi,  &\\
p(\rho)=K\rho^\gamma,  &  (K \geq 0, \; \gamma \geq 1).
\end{array} 
\right.
\end{equation}
where  $\rho>0$, $\mbfu \in \mathbb{R}^n$, $\phi \in \mathbb{R}$, and $\mu = \pm 1$. The system \eqref{EP} corresponds to the case $\mu=-1$ from which the KP-II equation is derived.

Introducing the scaling
\[
\bar{t} = \veps^{3/2}t, \quad \bar{x}_1 = \veps^{1/2}(x_1-Vt), \quad (\bar{x}_2,\bar{x}_3) = \veps(x_2,x_3), \quad \veps >0,
\]
\eqref{EP_A} becomes (neglecting the bar symbol)
\begin{equation}\label{EP_A1}
\left\{
\begin{array}{l l}
\veps\partial_{t} \rho - V\partial_{x_1}\rho + \partial_{x_1}(\rho u_1) + \veps^{1/2} \partial_{x_2}(\rho u_2) + \veps^{1/2} \partial_{x_3}(\rho u_3) = 0, \\ 
(\veps \partial_t - V\partial_{x_1})u_1 + (u_1\partial_{x_1} + \veps^{1/2}u_2\partial_{x_2} + \veps^{1/2}u_3\partial_{x_3})u_1 + \frac{\partial_{x_1}p(\rho)}{\rho} = -\partial_{x_1}\phi, \\
(\veps \partial_t - V\partial_{x_j})u_j + (u_1\partial_{x_1} + \veps^{1/2}u_2\partial_{x_2} + \veps^{1/2}u_3\partial_{x_3})u_j \\
\quad + \frac{\veps^{1/2}\partial_{x_j}p(\rho)}{\rho} = -\veps^{1/2}\partial_{x_j}\phi, \\
\mu \big(\veps \partial_{x_1}^2 + \veps^2 (\partial_{x_2}^2+\partial_{x_3}^2)\big)\phi = \rho - e^\phi,
\end{array} 
\right.
\end{equation}
where $j=2,3$. We assume that
\begin{equation}\label{FormalExp}
\rho = 1 + \sum_{j=1}^\infty \veps^j \rho^{(j)}, \quad u_1 =  \sum_{j=1}^\infty \veps^j u_1^{(j)}, \quad u_i =  \sum_{j=1}^\infty \veps^{j+1/2} u_i^{(j)},  \quad \phi = \sum_{j=1}^\infty \veps^j \phi^{(j)},
\end{equation}
and then we substitute the formal expansion \eqref{FormalExp} into \eqref{EP_A1}.

At the order of $\veps$, we have
\begin{equation}\label{EP_A2}
\left\{
\begin{array}{l l}
-V\partial_{x_1}\rho^{(1)} + \partial_{x_1}u_1^{(1)} = 0, \\ 
-V\partial_{x_1}u_1^{(1)} + K\gamma \partial_{x_1}\rho^{(1)} = -\partial_{x_1}\phi^{(1)} , \\
0 = \rho^{(1)} - \phi^{(1)}.
\end{array} 
\right.
\end{equation}
We let 
\begin{equation}\label{V}
V^2 = K\gamma + 1
\end{equation}
so that \eqref{EP_A2} has a nontrivial solution. Moreover, integrating \eqref{EP_A2} in $x_1$, we get
\begin{equation}\label{EP_A3}
u_1^{(1)} = V\rho^{(1)}, \quad \rho^{(1)} = \phi^{(1)}.
\end{equation}

At the order of $\veps^{3/2}$, we get 
\begin{equation}\label{EP_A4}
- V\partial_{x_1}u_j^{(1)} + K\gamma\partial_{x_j}\rho^{(1)} = -\partial_{x_j}\phi^{(1)}, \quad (j=2,3),
\end{equation}
and from \eqref{EP_A3} and \eqref{EP_A4}, we also obtain
\begin{equation}\label{EP_A5}
-\partial_{x_1}u_j^{(1)} + V\partial_{x_j}\rho^{(1)} = 0, \quad (j=2,3).
\end{equation}

At the order of $\veps^2$, we have 
\begin{subequations}\label{EP_A6}
\begin{align}[left = \empheqlbrace\,] 
& \partial_t\rho^{(1)} - V\partial_{x_1}\rho^{(2)} + \partial_{x_1}\left(u_1^{(2)} + \rho^{(1)}u_1^{(1)} \right) + \partial_{x_2}u_2^{(1)} +\partial_{x_3}u_3^{(1)} = 0, \label{EP_A61} \\
& \partial_t u_1 -V \partial_{x_1}u_1^{(2)} + u_1^{(1)}\partial_{x_1}u_1^{(1)} + K\gamma \partial_{x_1}\big( \rho^{(2)} + \frac{\gamma-2}{2}(\rho^{(1)})^2 \big) = -\partial_{x_1}\phi^{(2)} ,  \label{EP_A62} \\
&  \mu \partial_{x_1}^2\phi^{(1)} = \rho^{(2)} - \left( \phi^{(2)} + \frac{1}{2}(\phi^{(1)})^2 \right). \label{EP_A63}
\end{align}
\end{subequations}

We differentiate \eqref{EP_A63} in $x_1$ and then substitute it into \eqref{EP_A62}. Multiplying \eqref{EP_A61} by $V$, and then adding the resulting equations together, it is straightforward to see that the terms involving $(\rho^{(2)}, u_1^{(2)})$ are cancelled thanks to \eqref{V}. Then, using \eqref{EP_A3}, we have
\begin{equation*}
\partial_t \rho^{(1)} + \frac{K\gamma(\gamma+1) + 2}{2V}\rho^{(1)}\partial_{x_1}\rho^{(1)} - \frac{\mu}{2V}\partial_{x_1}^3\rho^{(1)} + \frac{1}{2}(\partial_{x_2}u_2^{(1)} + \partial_{x_3}u_3^{(1)}) = 0.
\end{equation*}
By differentiating the above equation in $x_1$, and then using \eqref{EP_A5}, we finally obtain
\[
\partial_{x_1}\left(\partial_t \rho^{(1)} + \frac{K\gamma(\gamma+1) + 2}{2V}\rho^{(1)}\partial_{x_1}\rho^{(1)} - \frac{\mu}{2V}\partial_{x_1}^3\rho^{(1)} \right) + \frac{V}{2}( \partial_{x_2}^2 + \partial_{x_3}^2)\rho^{(1)} = 0,
\]
which is the KP-II equation for $\mu=-1$ and the KP-I equation for $\mu=+1$  (see \eqref{KP}) after a suitable normalization.

\subsection{Leading order vector field is irrotational}
We derive that the curl of $\mbfu^{(1)}$ vanishes, where $\mbfu^{(1)}:=(u_1^{(1)},u_2^{(1)},u_3^{(1)})$ is the leading order term of the formal expansion \eqref{FormalExp}. 

From \eqref{EP_A3} and \eqref{EP_A5}, we see that 
\begin{equation}\label{Curl1}
\partial_{x_1}u_2^{(1)} = V\partial_{x_2}\rho^{(1)} = \partial_{x_2}u_1^{(1)}, \quad \partial_{x_1}u_3^{(1)} = V  \partial_{x_3} \rho^{(1)} = \partial_{x_3}u_1^{(1)}.
\end{equation}
By taking $\partial_{x_2}$ of \eqref{EP_A5} for $i=3$, we have
\[
\partial_{x_2}\partial_{x_1}u_3^{(1)} = V \partial_{x_2}\partial_{x_3}\rho^{(1)} =  \partial_{x_3}\partial_{x_2}u_1^{(1)} =  \partial_{x_3}\partial_{x_1}u_2^{(1)}.
\]
Integrating the above identity in $x_1$, we get
\begin{equation}\label{Curl2}
\partial_{x_2}u_3^{(1)} = \partial_{x_3}u_2^{(1)}.
\end{equation}
Combining \eqref{Curl1} and \eqref{Curl2}, we obtain  $\nabla \times \mbfu^{(1)} = \mbf0$.

\section*{Acknowledgement}
JB is supported by the National Research Foundation of Korea grant funded by the Ministry of Science and ICT (2022R1C1C2005658). DK is partly supported by JSPS KAKENHI Grant Numbers JP21K18586 and JP20K14344. The authors would like to express their gratitute to Jean-Claude Saut for many stimulating conversations on the topics related to the manuscript. JB wishes to thank the Laboratoire de Mathématiques at the Université Paris-Saclay, where the manuscript was completed, and Danielle Hilhorst for the invitation and hospitality.

 \end{document}